\newcommand*{\cd}{{\,(\cdot)\,}}
\newcommand*{\wx}{\widehat x}
\newcommand*{\la}{\langle}
\newcommand*{\ra}{\rangle}
\newcommand*{\wu}{\widehat u}
\newcommand*{\wou}{\widehat{\overline u}}
\newcommand*{\woa}{\widehat{\overline \alpha}}
\newcommand*{\ov}{\overline}
\newtheorem{theorem}{Theorem}
\newtheorem*{proposition}{Proposition}
\newtheorem*{definition}{Definition}
\begin{document}

\vskip 1cm


\title[Generalized maximum principle]{Generalized maximum principle in optimal control}
\author{E.~R.~Avakov, G.~G.~Magaril-Il'yaev}


\address{Institute of Control Sciences of the Russian Academy of Sciences}
\address{Moscow State University}

\maketitle

For an optimal control problem, the concept of a~strong local infimum is introduce, for
which necessary conditions consisting of some family of ``maximum principles'' are
formulated. If a~function delivers a~strong local minimum in this problem (and therefore,
a~strong local infimum), then this family contains the classical Pontryagin maximum
principle  (see \cite{P},~\cite{IT}). As a~corollary, we derive generalized necessary
conditions for a~strong local minimum for a~problem of the calculus of variations.
Examples  are given to show that the necessary conditions obtained in the present paper
generalize and strengthen classical results.

It is worth noting that some ideas for necessary conditions of this kind are contained in
the book by R.~V.~Gamkrelidze \cite{Gam}, where the Pontryagin maximum principle is
derived as a~corollary to the maximum principle for a~more general problem stated in
terms of generalized controls. We also employ this idea, but from somewhat different
positions and in a~less general setting, when the generalized controls are  sliding
regime control. On the one hand, this constraint is quite sufficient for our purposes,
and on the other hand, it leads to a~simpler set of tools.

\vskip15pt

Let $[t_0,t_1]$ be a line interval, $U$~be a~nonempty subset of~$\mathbb R^r$,
$\varphi\colon \mathbb R\times\mathbb R^n\times \mathbb R^r\to \mathbb R^n$~be a~mapping of
variables $t\in \mathbb R$, $x\in\mathbb R^n$ and $u\in U$, and $f\colon\mathbb
R^n\times\mathbb R^n\to \mathbb R^{m_1}$, $g\colon\mathbb R^n\times\mathbb R^n\to
\mathbb R^{m_2}$ be mappings of variables $\zeta_i\in\mathbb R^n$, $i=1,2$.

Consider the following optimal control problem
\begin{multline}\label{poc}
f_0(x(t_0),x(t_1))\to\min,\quad \dot x =\varphi(t,x,u(t)),\quad u(t)\in U,\\
f(x(t_0),x(t_1))\le0, \quad g(x(t_0),x(t_1))=0,
\end{multline}
where $x\cd\in AC([t_0,t_1],\mathbb R^n)$ (is an absolutely continuous vector function on
$[t_0,t_1]$) and $u\cd\in L_\infty([t_0,t_1],\mathbb R^r)$.

In what follows we assume that the {\it mapping~$\varphi$ is continuous together with its
derivative with respect to~$x$ on~$\mathbb R\times\mathbb R^n\times \mathbb R^r$, and the mappings $f$
and~$g$ are continuously differentiable on  $\mathbb R^n\times\mathbb R^n$}.

A~function $x\cd\in AC([t_0,t_1],\mathbb R^n)$ is called {\it admissible} in problem \eqref{poc} if  $f(x(t_0),x(t_1))\le0$, $g(x(t_0),x(t_1))=0$
and there exists $u\cd\in
L_\infty([t_0,t_1],\mathbb R^r)$ such that $\dot x(t)=\varphi(t,x(t),u(t))$ and $u(t)\in U$ for almost all $t\in[t_0,t_1]$,

\begin{definition} \rm
We say that a function $\wx\cd\in C([t_0,t_1],\mathbb R^n)$ delivers
a~{\it strong local infimum} in problem~\eqref{poc} if  $f(\wx(t_0),\wx(t_1))\le0$,
$g(\wx(t_0),\wx(t_1))=0$, there exist a~neighbourhood~$V$ of the point~$\wx\cd$ and a~sequence
$\{x_N\cd\}$, $N\in\mathbb N$, of admissible functions in~\eqref{poc} such that
$f_0(x(t_0),x(t_1))\ge f_0(\wx(t_0),\wx(t_1))$ for any admissible function $x\cd\in V$ and
$x_N\cd$ converges uniformly to $\wx\cd$ as $N\to\infty$.
\end{definition}

Clearly, if  a~pair $(\wx\cd,\wu\cd)$ delivers a~{\it strong minimum} in problem \eqref{poc}, then $\wx\cd$~is
a~strong local infimum in this problem. On the other hand, if
a~function $\wx\cd$ delivers a~strong local infimum in~\eqref{poc}, $\wx\cd$ is admissible, and
$\wu\cd$ is the corresponding control, then the pair $(\wx\cd,\wu\cd)$ delivers a~strong minimum
in this problem.

Given arbitrary $k\in\mathbb N$ and tuples $\ov u\cd=(u_1\cd,\ldots,u_k\cd)\in
(L_\infty([t_0,t_1]),\mathbb R^r))^k$ and $\ov\alpha\cd=(\alpha_1\cd,\ldots,\alpha_k\cd)\in
(L_\infty([t_0,t_1]))^k$, where $\alpha_i(t)\ge0$, $\alpha_i(t)\ne0$, $i=1,\ldots,k$, and
$\sum_{i=1}^k\alpha_i(t)=1$ for almost all $t\in[t_0,t_1]$,
we associate with the control system specifying the constraints in problem \eqref{poc}
the following {\it extended} (relaxation) control system
\begin{multline}\label{rpoc}
\dot x =\sum_{i=1}^k\alpha_i(t)\varphi(t,x,u_i(t)),\quad u_i(t)\in U,\quad i=1,\ldots,k,\\
f(x(t_0),x(t_1))\le0, \quad g(x(t_0),x(t_1))=0.
\end{multline}

A triple $(x\cd,\ov u\cd,\ov \alpha\cd)$ ($x\cd\in AC([t_0,t_1],\mathbb R^n))$ is called {\it
admissible} for system~\eqref{rpoc} if  it satisfies all constraints in~\eqref{rpoc}.

Let us introduce some notation. We let $\la
\lambda,x\ra=\sum_{i=i}^n\lambda_ix_i$
denote a~linear functional $\lambda=(\lambda_1,\ldots,\lambda_n)\in(\mathbb R^n)^*$
evaluated at a~point
$x=(x_1,\ldots,x_n)^T\in\mathbb R^n$ ($T$~is the transpose). By $(\mathbb R^n)^*_+$ we denote the set
of positive functionals on  $\mathbb R^n$. The adjoint operator to a~linear operator $\Lambda\colon \mathbb R^n\to\mathbb
R^m$ is denoted by~$\Lambda^*$.

Given a~fixed function $\wx\cd$, the partial derivatives of mappings  $f$ and $g$
with respect to $\zeta_1$ and~$\zeta_2$ at a~point $(\wx(t_0),\wx(t_1))$ will be
briefly denoted by $\widehat f_{\zeta_i}$ and $\widehat g_{\zeta_i}$, $i=1,2$, respectively.

\begin{theorem}[generalized maximum principle]\label{T1}
If a function $\wx\cd\in AC([t_0,t_1],\mathbb R^n)$ delivers a~strong local infimum in problem \eqref{poc}, then
for any $k\in\mathbb N$, $\wou\cd=(\wu_1\cd,\ldots,\wu_k\cd)$ and
$\woa\cd=(\widehat\alpha_1\cd,\ldots,\widehat\alpha_k\cd)$ such that the triple
$(\wx\cd,\wou\cd,\woa\cd)$ is admissible for the control system \eqref{rpoc}, there exist
a~nonzero tuple  $(\lambda_0,\lambda_f,\lambda_g)\in \mathbb R_+\times(\mathbb
R^{m_1})^*_+\times(\mathbb R^{m_2})^*$ and a~vector function $p\cd\in AC([t_0,t_1],(\mathbb
R^n)^*)$ such that the following conditions hold:
\begin{itemize}
\item[$1)$] the stationarity condition  with respect to $x\cd$
$$
\dot p(t) =-p(t)\sum_{i=1}^k\widehat\alpha_i(t)\varphi_x(t,\wx(t),\wu_i(t)),
$$
\item[2)] the transversality condition
$$
p(t_0)=\lambda_0{\widehat {f}}_{0\zeta_1}+{\widehat {f}_{\zeta_1}}^*\lambda_f+{\widehat
{g}_{\zeta_1}}^*\lambda_g,\quad p(t_1)=-\lambda_0{\widehat {f}}_{0\zeta_2}-{\widehat
{f}_{\zeta_2}}^*\lambda_f-{\widehat {g}_{\zeta_2}}^*\lambda_g,
$$
\item[3)] the complementary slackness condition
$$
\la \lambda_f, f(\wx(t_0),\wx(t_1))\ra=0,
$$
\item[4)] the maximum condition for almost all $t\in[t_0,t_1]$
\begin{multline*}
\max_{u\in U}\widehat\alpha_i(t)\la p(t),\varphi(t,\wx(t),u)\ra=
\widehat\alpha_i(t)\la p(t),\varphi(t,\wx(t),\wu_i(t))\ra,\\
i=1,\ldots,k,
\end{multline*}
\begin{equation*}
\max_{u\in U}\la p(t),\varphi(t,\wx(t),u)\ra=\la p(t),\dot {\wx}(t)\ra.
\end{equation*}
\end{itemize}

Moreover, if for some $k\in\mathbb N$ and a~triple $(\wx\cd, \ov u\cd,\ov\alpha\cd)$
admissible for the control system~\eqref{rpoc}, conditions {\rm 1)--4)} hold  only when
$\lambda_0\ne0$, then there exists a~sequence of functions $x_N\cd$, $N\in\mathbb N$
admissible in problem~\eqref{poc} such that $x_N\cd\to\wx\cd$ as $N\to\infty$ uniformly
on $[t_0,t_1]$.
\end{theorem}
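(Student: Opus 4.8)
The plan is to prove the necessary-conditions part (items 1)--4)) by a standard but carefully engineered variational argument, and then to obtain the final "converse-type" statement (the existence of an admissible approximating sequence when $\lambda_0\ne0$ is forced) by a contrapositive/homotopy argument. I address both.

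For the necessary conditions, I would fix $k$, $\wou\cd$, $\woa\cd$ with $(\wx\cd,\wou\cd,\woa\cd)$ admissible for \eqref{rpoc}, and consider the relaxed control system as the primal object. The key device is \emph{needle (Pontryagin-type) variations} performed simultaneously in each of the $k$ "channels": at Lebesgue points $\tau_j$ one replaces $\wu_{i}$ on a small interval of length $\varepsilon\theta_j$ by an arbitrary admissible value $v_j\in U$, and one also allows small perturbations of the endpoint $\wx(t_0)$ along directions compatible with the active constraints. Because $\wx\cd$ delivers a strong local \emph{infimum}, and because (by the relaxation/approximation lemma-type reasoning implicit in the sliding-regime setup — one approximates each $\alpha_i$-weighted needle by genuine controls taking values in $U$) these relaxed variations are realized, up to $o(\varepsilon)$ in the uniform norm, by honest admissible trajectories of \eqref{poc} lying in the neighbourhood $V$, the value $f_0$ cannot be decreased. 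Linearising the endpoint map $x(t_0)\mapsto(x(t_0),x(t_1))$ along the flow of the relaxed system, one gets that the convex cone generated by the needle-variation vectors $\widehat\alpha_i(\tau)\bigl(\varphi(\tau,\wx(\tau),v)-\varphi(\tau,\wx(\tau),\wu_i(\tau))\bigr)$ (propagated to $t_1$ by the fundamental matrix of the linearised equation in 1)) together with the derivatives of $f_0,f,g$, is separated from the "descent+feasible" directions. A finite-dimensional separation theorem then yields the multipliers $(\lambda_0,\lambda_f,\lambda_g)\ne0$ in $\mathbb R_+\times(\mathbb R^{m_1})^*_+\times(\mathbb R^{m_2})^*$; defining $p\cd$ as the solution of the adjoint equation 1) with terminal condition giving 2), one reads off 3) from complementary slackness of the inequality constraints and 4) from the arbitrariness of $v\in U$ and $\tau$ (a density/Lebesgue-point argument upgrades "for a.e. $\tau$ and each $v$" to the pointwise maximum; the second line of 4) follows by taking the $\alpha$-weighted sum over $i$, since $\sum_i\widehat\alpha_i=1$ and $\dot{\wx}=\sum_i\widehat\alpha_i\varphi(\cdot,\wx,\wu_i)$).

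For the final statement I would argue by contraposition combined with a degree/homotopy continuity argument. Suppose conditions 1)--4) hold only with $\lambda_0\ne0$ for the given triple; I want an admissible sequence $x_N\cd\to\wx\cd$. The idea is: if no such sequence existed, then $\wx\cd$ would be "isolated from feasibility" in a quantitative sense, which — running the variational construction above in reverse — would produce an \emph{abnormal} multiplier tuple, i.e. one with $\lambda_0=0$ satisfying 1)--4), contradicting the hypothesis. Concretely, one sets up the endpoint-constraint map $F(x(t_0),\{v_j,\theta_j,\tau_j\}) = (f(x(t_0),x(t_1)),g(x(t_0),x(t_1)))$ built from the relaxed flow, observes that $\lambda_0\ne0$-only means the "constraint part" of $F$ is, after the separation argument, actually \emph{regular} (surjective linearisation onto the relevant subspace modulo the inequality cone), and invokes a Lyusternik/Graves-type covering theorem (or Brouwer-degree argument on the sliding-regime controls) to solve $F=0$, $f\le 0$ with the extra requirement $f_0 < f_0(\wx)+1/N$ is \emph{not} needed — we only need feasibility, not optimality — producing $x_N\cd$ within $1/N$ of $\wx\cd$ uniformly. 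The approximation of the relaxed (sliding-regime) trajectory by a genuine $U$-valued control is done by the standard chattering construction, controlling the uniform distance by $O(1/N)$ via fine partitions.

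The main obstacle, I expect, is the approximation/realization step: guaranteeing that the relaxed variations — and, in the converse direction, the solution of $F=0$ found at the relaxed level — can be realized by \emph{genuine} admissible trajectories of \eqref{poc} that stay uniformly close to $\wx\cd$ and inside $V$. This is where the restriction to sliding-regime (rather than general Gamkrelidze generalized) controls pays off, since chattering lemmas give explicit uniform-norm error bounds; still, one must check that the needle variations and the chattering approximation interact correctly (the $o(\varepsilon)$ from linearisation must dominate the $O(1/N)$ from chattering, which forces the diagonal choice $N=N(\varepsilon)\to\infty$). A secondary technical point is the passage from the finitely many needle directions used in the separation argument to the full pointwise maximum condition 4), which requires the usual measurable-selection / Lebesgue-point machinery to avoid null-set pathologies.
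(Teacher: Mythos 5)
The paper you were given contains no proof of Theorem~1 --- it is an announcement in which only the Proposition of Example~2 is proved --- so your plan can only be measured against the route the authors themselves indicate in the introduction, namely Gamkrelidze's derivation of the maximum principle via relaxed (sliding-regime) controls. Your outline does follow that route: needle variations in each of the $k$ channels of the extended system \eqref{rpoc}, finite-dimensional separation to produce $(\lambda_0,\lambda_f,\lambda_g)$, the adjoint equation for $p\cd$, chattering approximation of relaxed trajectories by genuine $U$-valued controls, and a covering/Brouwer argument for the normal case. The skeleton is the right one, and your derivation of the second line of condition~4) from the first (summing with the weights $\widehat\alpha_i$ and using $\sum_{i}\widehat\alpha_i(t)=1$ a.e.) is correct.

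The genuine gap is the step where you assert that the relaxed needle variations ``are realized, up to $o(\varepsilon)$ in the uniform norm, by honest admissible trajectories of \eqref{poc} lying in $V$,'' and from this conclude directly that $f_0$ cannot be decreased. Chattering produces genuine trajectories of $\dot x=\varphi(t,x,u)$, $u(t)\in U$, uniformly close to the varied relaxed trajectory, but it does \emph{not} make them admissible: the endpoint constraints $f\le 0$, $g=0$ are only approximately satisfied, and correcting them exactly requires a topological (Brouwer fixed-point or covering) argument that is available precisely when the cone of propagated endpoint variations is full --- that is, exactly in the case where no abnormal multipliers exist. As written, your opening step is therefore circular: whether the variations can be realized \emph{admissibly} is equivalent to the very dichotomy that produces the multipliers. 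The argument must be restructured as a contradiction: assume no nonzero tuple satisfies 1)--4); by separation the convex cone generated by the needle vectors (transported by the fundamental matrix of the linearized equation) together with the differentials of $f_0,f,g$ then covers a descent-plus-feasible direction; chattering combined with the Brouwer correction then yields genuine admissible trajectories in $V$ with cost strictly below $f_0(\wx(t_0),\wx(t_1))$, contradicting the strong local infimum. The same covering machinery with the cost component dropped gives the second assertion; your contrapositive framing there is an unnecessary detour but not an error. Note also that $\wx\cd$ need not itself be admissible for \eqref{poc} (only the relaxed triple is admissible for \eqref{rpoc}), so every variation must be anchored at the relaxed trajectory rather than at a genuine one.
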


The first assertion of the theorem constitutes a~family of relations
(parameterized by all possible finite tuples $(\wou\cd,\woa\cd)$ such that
the triple $(\wx\cd,\wou\cd,\woa\cd)$ is admissible for the control system \eqref{rpoc}), of which each
has the form of a~maximum principle. Furthermore, if  $(\wx\cd,\wu\cd)$ is a~strong
minimum in problem \eqref{poc}, then this family contains (with $k=1$, $\wu_1\cd=\wu\cd$ and
$\widehat\alpha_1\cd=1$) the classical Pontryagin maximum principle.

\vskip10pt

As a corollary to Theorem \ref{T1} we obtain generalized conditions for strong local minimum
in the simplest problem of the classical calculus of variations.

Let a~function $L\colon \mathbb R\times\mathbb R^n\times\mathbb R^n\to\mathbb R$
of variables  $t\in\mathbb R$, $x\in\mathbb R^n$ and $\dot x\in\mathbb R^n$ be continuous together
with its partial derivatives with respect to~$x$, $\dot x$, and $x_i\in\mathbb R^n$, $i=0,1$.
Consider the problem
\begin{equation}\label{var1}
\int_{t_0}^{t_1}L(t,x(t),\dot x(t))\,dt\to\min,\quad x(t_0)=x_0,\quad x(t_1)=x_1.
\end{equation}

Given a fixed $\wx\cd$, we write for brevity $\widehat L(t)=L(t,\wx(t),\dot\wx(t))$,
and similarly for the derivatives of~$L$ with respect to $x$ and~$\dot x$.

We let  $\mathcal A^k$ denote the set of tuples
$\ov\alpha\cd=(\alpha_1\cd,\ldots,\alpha_k\cd)$ introduced before the definition of the
control system \eqref{rpoc}.

\begin{theorem}\label{T2}
If a~function $\wx\cd\in AC([t_0,t_1],\mathbb R^n)$ delivers a~strong local minimum in
problem \eqref{var1}, then $\widehat L_{\dot x}\cd\in AC([t_0,t_1],\mathbb R^n)$ and for
any $k\in\mathbb N$, $(\wu_1\cd,\ldots,\wu_k\cd)\in (L_\infty([t_0,t_1],\mathbb R^n))^k$
and $(\widehat\alpha_1\cd,\ldots,\widehat\alpha_k\cd)\in\mathcal A^k$ such that, for
almost all $t\in[t_0,t_1]$
\begin{equation}\label{usl}
\dot\wx(t)=\sum_{i=1}^k\widehat\alpha_i(t)\wu_i(t)\,\,\,\,\text{and}\,\,\,\,\, \widehat
L(t)=\sum_{i=1}^k \widehat\alpha_i(t)L(t,\wx(t),\wu_i(t)),
\end{equation}
the following conditions are satisfied:
\begin{itemize}
\item[$1)$] the generalized Euler equation
$$
-\frac{d}{dt}\widehat L_{\dot x}(t)+\sum_{i=1}^k
\widehat\alpha_i(t)L_x(t,\wx(t),\wu_i(t))=0,
$$
\item[2)] the generalized Weierstrass condition
\begin{multline*}
L(t,\wx(t),u)-L(t,\wx(t),\wu_i(t))-\la L_{\dot
x}(t,\wx(t),\wu_i(t)),u-\wu_i(t)\ra\ge0,\\i=1,\ldots,k,
\end{multline*}
for all $u\in\mathbb R^n$.
\item[3)] If $L$ is twice differentiable with respect to~$\dot x$, then the
generalized Legendre condition holds
$$
\widehat L_{\dot x\dot x}(t,\wx(t),\wu_i(t))\ge0,\quad i=1,\ldots,k.
$$
\end{itemize}
\end{theorem}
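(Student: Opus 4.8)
The plan is to derive Theorem \ref{T2} as a corollary of Theorem \ref{T1} by rewriting the calculus-of-variations problem \eqref{var1} as an optimal control problem in the standard way. First I would introduce the state $(x,y)\in\mathbb R^n\times\mathbb R$ with $\dot x = u$, $\dot y = L(t,x,u)$, control set $U=\mathbb R^n$, endpoint constraints $x(t_0)=x_0$, $x(t_1)=x_1$, $y(t_0)=0$, and objective $y(t_1)\to\min$. A strong local minimum $\wx\cd$ of \eqref{var1} corresponds to a strong local minimum $(\wx\cd,\wy\cd,\wu\cd)$ of this control problem with $\wu\cd=\dot{\wx}\cd$ and $\wy(t)=\int_{t_0}^tL(s,\wx(s),\dot{\wx}(s))\,ds$, hence also a strong local infimum. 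Here there are only equality endpoint constraints, so $\lambda_f$ is absent; the adjoint vector splits as $p(t)=(p_x(t),p_y(t))$ with $p_y$ constant by condition 1) since $\varphi$ does not depend on $y$.

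Next I would show $\lambda_0\ne0$ — equivalently $p_y\not\equiv 0$ — forcing the standard normalization $p_y\equiv-1$. If $\lambda_0=0$ then $p_y\equiv0$, and the stationarity equation 1) becomes $\dot p_x(t)=-p_x(t)\sum_i\widehat\alpha_i(t)\cdot 0=0$ (since $\varphi_x$ in the $x$-block is zero, as $\varphi$ does not depend on $x$ — wait, here $\varphi$ is $(u,L(t,x,u))$ so $\varphi_x$ is not zero). Let me instead argue directly: with $\lambda_0=0$ the transversality conditions give $p_x(t_0),p_x(t_1)$ free (only equality constraints $x(t_i)$, no sign condition) and $p_y(t_0)=0$, $p_y(t_1)=0$, hence $p_y\equiv0$; the maximum condition 4) then reads $\max_{u}\la p_x(t),u\ra=\la p_x(t),\dot{\wx}(t)\ra$ over $u\in\mathbb R^n$, which forces $p_x(t)\equiv0$, contradicting nontriviality of the multipliers. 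So $\lambda_0\ne0$, normalize $p_y\equiv-1$.

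Then, fixing $k$, $(\wu_1\cd,\ldots,\wu_k\cd)$ and $(\widehat\alpha_1\cd,\ldots,\widehat\alpha_k\cd)\in\mathcal A^k$ satisfying \eqref{usl}, I check that the triple $((\wx\cd,\wy\cd),(\wu_1\cd,\ldots,\wu_k\cd),(\widehat\alpha_1\cd,\ldots,\widehat\alpha_k\cd))$ is admissible for the relaxed system \eqref{rpoc}: the second relation in \eqref{usl} is exactly what makes the relaxed $y$-dynamics $\dot y=\sum_i\widehat\alpha_i L(t,\wx,\wu_i)$ reproduce $\wy\cd$, so the endpoint constraints still hold. Applying Theorem \ref{T1}, condition 1) in the $x$-block with $p_y\equiv-1$ gives $\dot p_x(t)=-p_x(t)\cdot 0+\sum_i\widehat\alpha_i(t)\widehat L_x(t,\wx(t),\wu_i(t))$; the maximum condition 4) with $i$ and with the aggregate read $\max_u[\la p_x(t),u\ra - \widehat\alpha_i(t)L(t,\wx,u)]$ attained at $u=\wu_i(t)$, and similarly the aggregate version. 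Identifying $p_x(t)=\widehat L_{\dot x}(t)$ — which follows from the $i$-th maximum condition at an interior maximizer, giving $p_x(t)=L_{\dot x}(t,\wx(t),\wu_i(t))$ for each $i$, and in particular $p_x\cd\in AC$ so $\widehat L_{\dot x}\cd\in AC([t_0,t_1],\mathbb R^n)$ — then turns 1) into the generalized Euler equation and the maximum condition into the generalized Weierstrass inequality. Part 3) follows because an interior maximum of the concave-from-above function $u\mapsto \la p_x(t),u\ra - \widehat\alpha_i(t)L(t,\wx(t),u)$ at $\wu_i(t)$ forces the Hessian of $-L$ in $\dot x$ to be $\le 0$ there whenever $\widehat\alpha_i(t)>0$, i.e. $\widehat L_{\dot x\dot x}(t,\wx(t),\wu_i(t))\ge0$.

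The main obstacle I expect is the identification $p_x(t)=\widehat L_{\dot x}(t)$ together with the absolute continuity claim: one must verify that the several maximum conditions (for $i=1,\ldots,k$ and the aggregate one) are mutually consistent and pin down $p_x$ uniquely as $L_{\dot x}(t,\wx(t),\wu_i(t))$ for every $i$ simultaneously almost everywhere, using that the maximizers are interior (since $U=\mathbb R^n$) so the first-order condition is an equality; after that, the regularity $\widehat L_{\dot x}\cd\in AC$ is inherited from $p_x\cd\in AC([t_0,t_1],(\mathbb R^n)^*)$, and all three conclusions are routine rewriting. A secondary technical point is checking the smoothness hypotheses of Theorem \ref{T1}: the map $\varphi(t,(x,y),u)=(u,L(t,x,u))$ is continuous together with its derivative in $(x,y)$ because $L$ and $L_x$ are continuous, and the endpoint maps are continuously differentiable, so Theorem \ref{T1} applies.
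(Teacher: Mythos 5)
Your reduction of \eqref{var1} to a Mayer-type control problem with state $(x,y)$, $\dot x=u$, $\dot y=L(t,x,u)$, $U=\mathbb R^n$, followed by an application of Theorem~\ref{T1} to the relaxed system, the elimination of the abnormal case $\lambda_0=0$ via transversality and the unboundedness of $u\mapsto\la p_x(t),u\ra$, and the identification $p_x(t)=\widehat L_{\dot x}(t)$ from the interior first-order condition, is exactly the route the paper intends (it presents Theorem~\ref{T2} precisely as a corollary of Theorem~\ref{T1} via this reformulation), and your derivation of conditions 1)--3) from stationarity and the maximum condition is correct. The only caveat, which is already implicit in the paper's own statement, is that the pointwise identities $p_x(t)=L_{\dot x}(t,\wx(t),\wu_i(t))$ and hence conditions 2) and 3) are obtained only for almost every $t$ with $\widehat\alpha_i(t)>0$.
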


It is clear that for  $k=1$, $\wu_1\cd=\dot\wx\cd$ and $\widehat\alpha_1\cd=1$ conditions
\eqref{usl} hold trivially and conditions 1)--3) pass into the classical strong minimum
conditions in problem~\eqref{var1}.

\vskip10pt

The first example illustrates how a~strong local infimum can be found with the help of Theorem~\ref{T1}.

\subsection*{Example 1}

Let $f\colon [0,1]\to\mathbb R$, $g\colon \mathbb R\to\mathbb R$, $m$  be an even number.
Consider the optimal control problem
\begin{multline}\label{exam1}
J(x\cd,u\cd)=\int_0^1((x(t)-f(t))^m+g(u(t)))\,dt\to\min,\quad \dot x=u,\\ |u(t)|
\ge1,\quad x(0)=0,\quad x(1)=f(1).
\end{multline}

We shall assume that the function $f$ is absolutely continuous, $f(0)=0$, and $|\dot f(t)|\le1$,
$|\dot f(t)|\ne1$ for almost all $t\in[0,1]$.
We also assume that~$g$ is continuous on $\mathbb R$, $g(-1)=g(1)$, and $g(u)>g(1)$ for  $|u|>1$.

Our aim is to evaluate the infimum of the functional $J(x\cd,u\cd)$ and find a~sequence
of admissible pairs $(x_N\cd,u_N\cd)$, $N\in\mathbb N$ in problem~\eqref{exam1} on which
the sequence $J(x_N\cd,u_N\cd)$ converges to this infimum.

We transform problem \eqref{exam1} to the equivalent Mayer problem
\begin{multline}\label{exam2}
x_2(1)-x_2(0)\to\min,\quad \dot x_1=u,\quad \dot x_2=(x_1-f(t))^m+g(u),\\
|u(t)|\ge1, \quad x_1(0)=0,\quad x_1(1)=f(1).
\end{multline}

Using Theorem~\ref{T1}, we shall try to find a~function which delivers a~strong local
infimum in this problem. If such a~function $\widehat{\ov x}\cd=(\wx_1\cd,\wx_2\cd)$
is found, then by definition $x_2(1)-x_2(0)\ge\wx_2(1)-\wx_2(0)$ for all admissible
functions $\ov x\cd$ from some neighbourhood of~$\widehat{\ov x}\cd$ and there exists
a~sequence of admissible for~\eqref{exam2} functions  $\ov x_N\cd=(x_{1N}\cd,
x_{2N}\cd)$ that converges uniformly to $\widehat{\ov x}\cd$ as $N\to\infty$. It follows that
$J(x\cd,u\cd)\ge \wx_2(1)-\wx_2(0)$ for all admissible pairs in problem \eqref{exam1} in which $x\cd$ lies
in some neighbourhood of $\wx_1\cd$. Setting
$x_{1N}\cd=x_N\cd$, we find a~sequence of
pairs $(x_N\cd,u_N\cd)$ admissible in problem \eqref{exam1},
where $u_N\cd=\dot x_N\cd$, such that $J(x_N\cd,u_N\cd)\to \wx_2(1)-\wx_2(0)$ as $N\to\infty$.

We apply Theorem~\ref{T1} with $k=2$. By this theorem if  tuples $(\widehat
\alpha_1\cd,\widehat \alpha_2\cd)$ and $(\wu_1\cd,\wu_2\cd)$ are such that
\begin{equation}\label{exam3}
\begin{aligned}
\dot{\wx}_1(t)&=\widehat \alpha_1(t)\wu_1(t)+\widehat\alpha_2(t)\wu_2(t),\\
\dot{\wx}_2(t)&=\widehat \alpha_1(t)g(\wu_1(t))+\widehat\alpha_2(t)g(\wu_2(t))+
(\wx_1(t)-f(t))^m
\end{aligned}
\end{equation}
and $\wx_1(0)=0$, $\wx_1(1)=f(1)$, then there exist a~nonzero set of Lagrange multipliers
$(\lambda_0,\lambda_1,\lambda_2)$, where $\lambda_0\ge0$, and an absolutely continuous function
$p\cd$, such that
\begin{multline}\label{exam4}
\dot p_1=- p_2m(\wx(t)-f(t))^{m-1},\quad \dot p_2=0,\quad p_1(0)=\lambda_1,\quad p_1(1)=-\lambda_2,\\
p_2(0)=p_2(1)=-\lambda_0
\end{multline}
and
\begin{equation}\label{exam5}
\max_{u\in
U}(p_1(t)u+p_2(t)((\wx(t)-f(t))^{m}+g(u)))\!=p_1(t)\dot\wx_1(t)+p_2(t)\dot\wx_2(t)
\end{equation}
for almost all $t\in[t_0,t_1]$.

By examining relations \eqref{exam3}, \eqref{exam4} and \eqref{exam5} one can show that they define uniquely,
up to an additive constant, the function
$(\wx_1\cd,\wx_2\cd)$  (where $\wx_1(t)=f(t)$, $\wx_2(t)=g(1)t+c$ for any
$c\in\mathbb R$ and all $t\in[t_0,t_1]$) and (assuming  $\lambda_0=1$) the Lagrange multipliers
$\lambda=(1,0,0)$ and $p=(0,1)$. Here it suffices to put $\wu_1(t)\equiv 1$,
$\wu_2(t)\equiv-1$, which gives $\widehat\alpha_1(t)=(1+\dot f(t))/2$,
$\widehat\alpha_2(t)=(1-\dot f(t))/2$ for almost all $t\in[t_0,t_1]$.

So, $t\mapsto (f(t),\ g(1)t+c)$ is the only trajectory  suspected for a~strong local infimum in problem~\eqref{exam2} for each
$c\in\mathbb R$. Note that this trajectory is not admissible for this problem. We claim that it delivers a~strong local infimum.

Indeed, $\lambda_0\ne0$, for otherwise relations \eqref{exam4} and \eqref{exam5}
would hold only for $\lambda_1=\lambda_2=0$. Hence, by the second assertion
of the theorem there exists a~sequence of functions
$(x_{1N}\cd, \,x_{2N}\cd)$ which are admissible for problem \eqref{exam2}  and uniformly
converge to $(\wx_1\cd,\, \wx_2\cd)$ as $N\to\infty$.

Additionally, it is clear that  $x_2(1)-x_1(0)\ge g(1)=\wx_2(1)-\wx_2(0)$  for any admissible function $x\cd=(x_{1}\cd, \,x_{2}\cd)$,
and hence by definition $\wx\cd$~delivers the global infimum in problem~\eqref{exam2}.
Hence, the infimum of the functional
$J(x\cd,u\cd)$ is $g(1)$, and by the above there exists a~sequence
of admissible pairs in problem \eqref{exam1} on which this functional converges to~$g(1)$.

Let us construct directly a~sequence of admissible pairs $(x_N\cd\, u_N\cd)$ in problem \eqref{exam1} such that
$J(x_N\cd,u_N\cd)\to g(1)$ as $N\to\infty$. Let $N\in\mathbb
N$. We split the interval $[0,1]$ into  $N$~intervals: $[s/N,\,(s+1)/N]$, $s=0,\ldots,N-1$.
Define $b_N(s)=f(s/N)-(s/N)$ and $c_N(s)=f((s+1)/N)+(s+1)/N$, $s=0,\ldots,N-1$. It is
easily checked that $((c_N(s)-b_N(s))/2)\in [s/N,(s+1)/N]$, $s=0,\ldots,N-1$.

Consider the sequence $x_{N}\cd$ defined by
\begin{equation*}
x_{N}(t)=
\begin{cases} t+b_N(s),\qquad t\in[s/N, \,(c_N(s)-b_N(s))/2],\\[10pt]
-t+c_N(s),\quad \ t\in[(c_N(s)-b_N(s))/2, \,(s+1)/N],
\end{cases}
\end{equation*}
$s=0,\ldots,N-1$.
Each $x_{N}(t)$ is a broken line (with slopes $\pm1$  of the segments and which interpolates $f\cd$ at the points
$s/N$, $s=0,\ldots,N$) and $x_{N}(t)$ uniformly converges to~$f\cd$. The sequence of pairs
$(x_{N}\cd,\,u_N\cd)$, where $u_N\cd=\dot x_{N}\cd$, is admissible in problem \eqref{exam1}, because
$|u_{N}(t)|=1$ for almost all $t\in[t_0,t_1]$, and since  $J(x_{N}\cd, u_{N}\cd)\to g(1)$
as $N\to\infty$, which is clear.

\vskip10pt

The following example shows that even in the classical setting the above assertions are
capable of delivering additional information about the strong minimum compared to with
known necessary conditions.

\subsection*{Example 2}

Let $L\colon \mathbb R\times\mathbb R\to\mathbb R$. Consider the following
variational calculus problem
\begin{equation}\label{z1}
\int_{0}^{1}L(x,\dot x)\,dt\to\min,\quad x(0)=x(1)=0.
\end{equation}
Assume that the function $L$ is continuously differentiable, $L_x(0,0)=L_{\dot x}(0,0)=0$ and
$L(0,\dot x)=0$ for any~$\dot x$ (a~typical situation when $L(x,\dot x)=xf(\dot x)$,
where the function~$f$ is continuously differentiable and $f(0)=0$).

It is an elementary matter to verify that the function $\wx\cd=0$ satisfies the Pontryagin maximum principle
($\dot x=u$, $u\in U=\mathbb R$). The next result is proved using Theorem~\ref{T2}.

\begin{proposition}
If a function $\wx\cd=0$ delivers a~strong local minimum  in problem \eqref{z1}, then the function $\dot x\to L_x(0,\dot x)$ is linear.
\end{proposition}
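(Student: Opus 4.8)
The plan is to apply Theorem~\ref{T2} to the problem \eqref{z1}, viewed as an instance of \eqref{var1} with $L=L(x,\dot x)$ (no explicit $t$-dependence), $x_0=x_1=0$, and $\wx\cd=0$. First I would record what the hypotheses give along $\wx\cd=0$: since $L(0,\dot x)=0$ identically, we have $\widehat L(t)=0$, $\widehat L_{\dot x}(t)=L_{\dot x}(0,0)=0$, and $\widehat L_x(t)=L_x(0,0)=0$ for all $t$; so the classical Euler equation and the classical Weierstrass/Legendre conditions are satisfied trivially, which is exactly why Theorem~\ref{T2} with $k=1$ gives nothing. The point is to choose $k=2$ and exploit the relaxation condition \eqref{usl} with a genuinely oscillating pair of velocities.

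Next I would make the key choice: fix an arbitrary pair of values $v_1,v_2\in\mathbb R$ and weights $\alpha_1,\alpha_2\in(0,1)$ with $\alpha_1+\alpha_2=1$ and $\alpha_1 v_1+\alpha_2 v_2=0$ (for instance $v_2=-\tfrac{\alpha_1}{\alpha_2}v_1$), and set $\wu_1(t)\equiv v_1$, $\wu_2(t)\equiv v_2$, $\widehat\alpha_i(t)\equiv\alpha_i$. Then the first condition in \eqref{usl} reads $\dot\wx(t)=0=\alpha_1 v_1+\alpha_2 v_2$, which holds by construction, and the second reads $\widehat L(t)=0=\alpha_1 L(0,v_1)+\alpha_2 L(0,v_2)=0$, which holds because $L(0,\cdot)\equiv 0$. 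So \eqref{usl} is satisfied for this tuple, and Theorem~\ref{T2} applies. Its conclusion 1), the generalized Euler equation, gives
\[
-\frac{d}{dt}\widehat L_{\dot x}(t)+\alpha_1 L_x(0,v_1)+\alpha_2 L_x(0,v_2)=0 .
\]
Since $\widehat L_{\dot x}(t)=L_{\dot x}(0,0)=0$ is constant, $\tfrac{d}{dt}\widehat L_{\dot x}(t)=0$, so this reduces to the pointwise (in $v$) identity
\[
\alpha_1 L_x(0,v_1)+\alpha_2 L_x(0,v_2)=0 .
\]

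Finally I would extract linearity of $w(v):=L_x(0,v)$ from this family of identities. Put $v_2=0$: then $L_x(0,0)=0$ forces $\alpha_1 w(v_1)=0$ for every $\alpha_1\in(0,1)$ and every $v_1$ — wait, that would say $w\equiv0$; the resolution is that with $v_2=0$ the constraint $\alpha_1v_1+\alpha_2v_2=0$ forces $v_1=0$ too, so that degenerate choice is vacuous. Instead, for arbitrary $a,b\in\mathbb R$ not both zero, choose $\alpha_1=\tfrac12$, $\alpha_2=\tfrac12$ is too rigid; better: given any $v_1$ and any $\alpha_1\in(0,1)$, the identity says $w(v_2)=-\tfrac{\alpha_1}{\alpha_2}w(v_1)$ where $v_2=-\tfrac{\alpha_1}{\alpha_2}v_1$, i.e. writing $\mu=\tfrac{\alpha_1}{\alpha_2}\in(0,\infty)$ we get $w(-\mu v_1)=-\mu\, w(v_1)$ for all $v_1\in\mathbb R$ and all $\mu>0$. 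Replacing $v_1$ by $-v_1$ and combining, $w(\mu v_1)=\mu w(v_1)$ for all $\mu>0$, $v_1\in\mathbb R$, and $w(-v)=-w(v)$; hence $w$ is homogeneous of degree one on all of $\mathbb R$. A degree-one homogeneous function of one real variable is automatically linear ($w(v)=w(1)v$ for $v\ge0$ and $w(v)=-w(-v)=w(1)v$ for $v<0$, using $w(0)=0$), which is the assertion. The main obstacle here is not depth but care in the bookkeeping: one must verify that \eqref{usl} is genuinely met (it is, precisely because $L(0,\cdot)\equiv0$ makes the relaxed cost match $\widehat L\equiv0$), and one must avoid the degenerate tuples where the zero-average constraint collapses the two velocities to a single point; steering around that is what forces the homogeneity argument rather than a one-line substitution.
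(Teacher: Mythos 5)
Your proposal is correct and follows essentially the same route as the paper: apply Theorem~\ref{T2} with $k=2$, constant controls of opposite sign whose convex combination vanishes (so that \eqref{usl} holds trivially because $L(0,\cdot)\equiv0$), and read off the identity $\alpha_1L_x(0,v_1)+\alpha_2L_x(0,v_2)=0$ from the generalized Euler equation. The only cosmetic difference is in the last step: the paper substitutes specific pairs $(u_1,u_2)$ into $u_2L_x(0,u_1)=u_1L_x(0,u_2)$ directly, while you repackage the same family of identities as oddness plus positive homogeneity of $v\mapsto L_x(0,v)$ — both yield $L_x(0,u)=L_x(0,1)u$.
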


\begin{proof}
We apply the theorem with $k=2$. It is clear that conditions \eqref{usl} are satisfied for any $u_1<0$, $u_2>0$ and
$\alpha_1=u_2/(u_2-u_1)$, $\alpha_2=-u_1/(u_2-u_1)$. Hence, the Euler equation holds, which in this case reads as
\begin{equation}\label{eqe}
u_2L_x(0,u_1)=u_1L_x(0,u_2).
\end{equation}
Setting here $u_1=-1$, $u_2=1$, we find
\begin{equation}\label{z6}
L_{x}(0,-1)=-L_{x}(0,1).
\end{equation}

Let $u\in\mathbb R$ and $u\ne0$. If $u<0$, then from \eqref{eqe} for $u_1=u$ and $u_2=1$ we find that
$$
L_x(0,u)=L_{x}(0,1)u.
$$
If $u>0$, then again from \eqref{eqe} with $u_2=u$ and $u_1=-1$ and taking into account \eqref{z6}, we have
$$
L_x(0,u)=-L_{x}(0,-1)u=L_{x}(0,1)u.
$$
If $u=0$, then by the condition $L_x(0,0)=0$, and so $L_x(0,u)=L_{x}(0,1)u$ for any $u\in\mathbb R$.
\end{proof}

In fact a more general fact holds. Assume that in problem \eqref{z1}\enskip
$\dot x=u$ and $u(t)\in U$ for almost all $t\in [t_0,t_1]$, where $U$~is an arbitrary set,
but $0\in{\rm int}\,U$. No differentiability of~$L$ with respect to~$u$ is required. If  $\wx\cd=0$
delivers a~strong local infimum in this problem, then using Theorem~\ref{T1} and
arguing as in the proposition, we find that the function $u\mapsto L_x(0,u)$ is linear on
some interval with centre at the origin.

Thus, Theorem \ref{T1} is a~strengthening of the Pontryagin maximum principle.

\smallskip

\textbf{Acknowledgement.} The authors are sincerely grateful to Revaz Valer'yanovich
Gamkrelidze for helpful discussions.

\end{document}